\pdfoutput=1
\RequirePackage{ifpdf}
\ifpdf % We~are running pdfTeX in pdf mode
\documentclass[pdftex]{sigma}
\else
\documentclass{sigma}
\fi

\numberwithin{equation}{section}

\newtheorem{Theorem}{Theorem}[section]
\newtheorem*{Theorem*}{Theorem}

\newtheorem{Proposition}[Theorem]{Proposition}
 { \theoremstyle{definition}

\newtheorem{Remarks}[Theorem]{Remarks} }

\usepackage{tikz-cd}
\DeclareMathOperator\C{\mathbb C}
\DeclareMathOperator\Z{\mathbb Z}

\newcommand{\dontprint}[1]\relax

\newcommand{\hra}{\hookrightarrow}
\newcommand{\we}{\wedge}

\renewcommand{\P}{{\mathbb P}}
\newcommand{\A}{{\mathbb A}}

\newcommand{\ot}{\otimes}
\newcommand{\Hom}{\operatorname{Hom}}
\newcommand{\Ext}{\operatorname{Ext}}

\newcommand{\OO}{{\mathcal O}}

\newcommand{\sub}{\subset}

\newcommand{\om}{\omega}
\newcommand{\la}{\lambda}
\renewcommand{\a}{\alpha}

\newcommand{\id}{\operatorname{id}}

\newcommand{\G}{{\mathbb G}}

\newcommand{\lan}{\langle}
\newcommand{\ran}{\rangle}

\renewcommand{\k}{{\mathbf{k}}}

\begin{document}
\allowdisplaybreaks

\newcommand{\arXivNumber}{2301.13417}

\renewcommand{\PaperNumber}{059}

\FirstPageHeading

\ShortArticleName{Ten Compatible Poisson Brackets on $\mathbb P^5$}

\ArticleName{Ten Compatible Poisson Brackets on $\boldsymbol{\mathbb P^5}$}

\Author{Ville NORDSTROM~$^{\rm a}$ and Alexander POLISHCHUK~$^{\rm ab}$}

\AuthorNameForHeading{V.~Nordstrom and A.~Polishchuk}

\Address{$^{\rm a)}$~Department of Mathematics, University of Oregon, Eugene, OR 97403, USA}
\EmailD{\href{mailto:villen@uoregon.edu}{villen@uoregon.edu}, \href{mailto:apolish@uoregon.edu}{apolish@uoregon.edu}}

\Address{$^{\rm b)}$~National Research University Higher School of Economics, Moscow, Russia}

\ArticleDates{Received February 18, 2023, in final form August 03, 2023; Published online August 13, 2023}

\Abstract{We give explicit formulas for ten compatible Poisson brackets on $\P^5$ found in arXiv:2007.12351.}

\Keywords{compatible Poisson brackets; homological perturbation; Massey products}

\Classification{53D17; 18G70; 14H52}

\section{Introduction}

The goal of this paper is to present explicit formulas for certain algebraic Poisson brackets on~$\P^5$.

Recall that two Poisson brackets $\{\cdot,\cdot\}_1$, $\{\cdot,\cdot\}_2$ are called {\it compatible} if any linear combination
$\{\cdot,\cdot\}_1+\la\cdot \{\cdot,\cdot\}_2$ is still a Poisson bracket (i.e., satisfies the Jacobi identity).
Pairs of compatible Poisson brackets play an important role in the theory of integrable systems.

With every normal elliptic curve $C$ in $\P^n$ one can associate naturally a Poisson bracket on $\P^n$, called a {\it Feigin--Odesskii bracket
of type $q_{n+1,1}$}. The corresponding quadratic Poisson brackets on $\A^{n+1}$ arise as quasi-classical limits of Feigin--Odesskii elliptic algebras.
On the other hand, they can be constructed using the geometry of vector bundles on $C$ (see~\cite{FO95, P98}).

It was discovered by Odesskii--Wolf~\cite{OW} that for every $n$ there exists a family of $9$ linearly independent mutually compatible Poisson brackets on $\P^n$,
such that their generic linear combinations are Feigin--Odesskii brackets of type $q_{n+1,1}$. In~\cite{HP}, this construction was explained and extended in terms of anticanonical
line bundles on del Pezzo surfaces. It was observed in~\cite[Example~4.6]{HP} that in this framework one also obtains $10$ linearly independent mutually compatible
Poisson brackets on $\P^5$. In this paper, we will produce explicit formulas for these $10$ brackets (see Theorem \ref{main-thm}).

\section[Homological perturbation for P\^{}n]{Homological perturbation for $\boldsymbol{\P^n}$}

\subsection{Formula for the homotopy}

Let
\[
H=\bigoplus_{p\geq 0,\, q\in \Z}H^p(\P^n,\OO(q))
\]
be the cohomology algebra of line bundles on $\P^n$, and
\[
A=\Big(\bigoplus_{p\geq 0,\, q\in\Z} C^p (\P^n,\OO(q)),d\Big)
\]
the \v{C}ech complex with respect to the standard open covering $U_i=(x_i\neq 0)$ of $\P^n$.
There is a~natural dg-algebra structure on $A$, such that the corresponding cohomology algebra is
$H$. The multiplication on $A$ is defined as follows. For $\alpha \in C^p(\P^n,\OO(q))$ and $\beta\in C^{p'}(\P^n,\OO(q'))$, we define $\alpha\beta\in C^{p+p'}(\P^n,\OO(q+q'))$ by
\[
(\alpha\beta)_{i_0i_1\dots i_{p+p'}}:=\alpha_{i_0\dots i_p}|_{U_{i_0\dots i_{p+p'}}}\cdot \beta_{i_p\dots i_{p+p'}}|_{U_{i_0\dots i_{p+p'}}},
\]
where on the right hand side we use the multiplication map $\OO(q)\otimes \OO(q')\to \OO(q+q')$.

The homological perturbation lemma equips $H$ with a minimal $A_\infty$-structure $(m_n)$, where $m_2$ is the
usual product on $H$. We will use the form of this lemma due to Kontsevich--Soibelman~\cite{KS}, which gives formulas
for $m_n$ as sums over trees. To apply homological perturbation, we need the following data:
\begin{itemize}\itemsep=0pt
\item a projection $\pi\colon  A\to H$,
\item an inclusion $\iota\colon  H\to A$, and
\item a homotopy $Q$ such that $\pi\iota=\id_H$ and $\id_A-\iota\pi={\rm d}Q+Q{\rm d}$.
\end{itemize}
Recall that
 $H^0=\C[x_0,\dots ,x_n]$,
 \[
 H^n\simeq \bigoplus_{e_0,\dots ,e_n<0} \k\cdot x_0^{e_0}x_1^{e_1}\cdots x_n^{e_n}\sub A^n,
 \]
 and $H^i=0$ for $i\neq 0,n$. We define $\iota$ in degree zero by $\iota(f)_k=f$
 for $k=0,1,\dots ,n$. We define $\iota$ in degree $n$ by $\iota(g)_{0\ldots n}=g$. We define the projection
 in degree zero to be
\[
\pi(\gamma)=\begin{cases}
\gamma_n & \text{if } \, \gamma_n\in \C[x_0,\dots ,x_n],\\
0 & \text{else}.	
\end{cases}
\]
To define $\pi$ in degree $n$, we observe that
\[A^n=\bigoplus_{e_0,\dots ,e_n\in \Z}\k\cdot x_0^{e_0}x_1^{e_1}\cdots x_n^{e_n},\]
 and we let $\pi$ be the natural projection to $H^n$.

To define the homotopy, we use that $A$ decomposes as a direct sum of chain complexes
\[
A=\oplus_{\vec{e}\in \Z^{n+1}}A(\vec{e}),
\]
where $A(\vec{e})$ consists of all elements
 in $A$ whose components are scalar multiples of $x^{\vec{e}}:=x_0^{e_0}x_1^{e_1}\cdots x_n^{e_n}$. In other words,
 $A(\vec{e})$ is the $\vec{e}$-isotypical summand with respect to the action of the group $\G_m^{n+1}$.

 Let us set for $\vec{e}\in\Z^{n+1}$,
 \[k(\vec{e}):=\max\{i \ |\ e_i\geq 0\}\]
(which is equal to $-\infty$ if all $e_i$ are negative).
 There is then a standard homotopy $Q$ defined on an element $\gamma\in A(\vec{e})^{p}$ by
$Q(\gamma)_{i_0i_1\dots  i_{p-1}}=\gamma_{k(\vec{e})i_0\dots i_{p-1}}$ if $k(\vec{e})>-\infty$
 and $Q(\gamma)_{i_0i_1\dots  i_{p-1}}=0$ otherwise (i.e., if all $e_i$ are negative).
 %{\color{red} REFERENCE???}

 For a Laurent monomial $x^{\vec{e}}$ and a subset
 $I=\{i_0,\dots ,i_p\}\subset \{0,1,\dots ,n\}$ such that $I\supset \{0\leq i\leq n|e_i<0\}$, let us denote
 by $x^{\vec{e}}_I$ the element of $A^p$ given by
 \[
 (x^{\vec{e}}_I)_{j_0\ldots j_p}=\begin{cases}
	x^{\vec{e}} & \text{if } \{j_0,\dots ,j_p\}=I,\\ 0 &\text{otherwise}.\end{cases}
\]
	Note that the condition $I\supset \{0\leq i\leq n \, |\, e_i<0\}$ guarantees that $x^{\vec{e}}$ is a regular section of the appropriate line bundle over $U_{i_0\ldots i_p}$.
	Clearly, these elements form a basis for $A$ and our homotopy operator $Q$ is given by
	\[
Q\big(x^{\vec{e}}_I\big)=\begin{cases}
		(-1)^jx^{\vec{e}}_{I\setminus k(\vec{e})} & \text{if }k(\vec{e})=i_j\in I,\\
		0 & \text{otherwise}.
	\end{cases}
\]

With these data one can in principle calculate all the higher products on the cohomology algebra $H$. Below, we will get explicit formulas in the case we need.

\subsection[Calculation of m\_4 for P\^{}2]{Calculation of $\boldsymbol{m_4}$ for $\boldsymbol{\P^2}$}\label{m4-P2-calc-sec}

We now specialize to the case of the projective plane $\mathbb{P}^2$. We have no higher products of odd degree because $H$ and $H^{\otimes n}$ only live in even degrees.
Also, for degree reasons the product $m_4$ will only be non-zero on elements $e\otimes f\otimes g\otimes h\in H^{\otimes 4}$ where one or two of the arguments lie in $H^2$ and the rest in $H^0$. Below, we will explicitly compute the product $m_4$ involving one argument in $H^2$.
Thus, the following special case of the multiplication in $A$ will be relevant: for a monomial $x^{\vec{e}}$ and a Laurent monomial $x^{\vec{e}\,'}$, we have
\[
\iota_0\big(x^{\vec{e}}\big)\cdot x^{\vec{e}\,'}_I=x^{\vec{e}\,'}_I\cdot \iota_0\big(x^{\vec{e}}\big)=x^{\vec{e}+\vec{e}\,'}_I.
\]

We use the formula
\[
m_4(e,f,g,h)=-\sum_{T}\epsilon(T)m_T(e,f,g,h),
\]
where the sum runs over all rooted binary trees with 4 leaves labeled $e$, $f$, $g$ and $h$ (from left to right). For each such tree $T$ the expression $m_T(e,f,g,h)$ is computed by moving the inputs through that tree, applying $\iota$ at the leaves, applying the homotopy $Q$ on each interior edge, multiplying elements of $A$ at each inner vertex and finally applying the projection $\pi$ at the bottom.

We have to sum over the following five trees, which we denote $T_1,\dots ,T_5$, respectively,

\begin{center}\begin{tikzpicture}[thick, scale=0.4]
\draw (1,5) -- (4,2);
\draw (5,5) -- (3,3);
\draw (3,5) -- (4,4);
\draw (7,5) -- (4,2);
\draw (4,2) -- (4,1);
\end{tikzpicture}\quad\begin{tikzpicture}[thick, scale=0.4]
\draw (1,5) -- (4,2);
\draw (5,5) -- (4,4);
\draw (3,5) -- (5,3);
\draw (7,5) -- (4,2);
\draw (4,2) -- (4,1);
\end{tikzpicture}\quad\begin{tikzpicture}[thick, scale=0.4]
\draw (1,5) -- (4,2);
\draw (5,5) -- (6,4);
\draw (3,5) -- (5,3);
\draw (7,5) -- (4,2);
\draw (4,2) -- (4,1);
\end{tikzpicture}\quad\begin{tikzpicture}[thick, scale=0.4]
\draw (1,5) -- (4,2);
\draw (5,5) -- (6,4);
\draw (3,5) -- (2,4);
\draw (7,5) -- (4,2);
\draw (4,2) -- (4,1);
\end{tikzpicture}\quad\begin{tikzpicture}[thick, scale=0.4]
\draw (1,5) -- (4,2);
\draw (5,5) -- (3,3);
\draw (3,5) -- (2,4);
\draw (7,5) -- (4,2);
\draw (4,2) -- (4,1);
\end{tikzpicture}
\end{center}
Let us first consider the case $e\in H^2$ and $f,g,h\in H^0$ and let's take them all to be basis elements of $H^2$ and $H^0$:
\[
e=\big(x_0^{\alpha_0} x_1^{\alpha_1} x_2^{\alpha_2}\big)_{\{0,1,2\}}, \qquad f=x_0^{a_0}x_1^{a_1}x_2^{a_2}, \qquad g=x_0^{b_0}x_1^{b_1}x_2^{b_2}, \qquad h=x_0^{c_0}x_1^{c_1}x_2^{c_2},
\]
where $\alpha_0,\alpha_1,\alpha_2<0$ and $a_i,b_i,c_i\geq 0$ for $i=0,1,2$. In this case only one of the trees above can be non-zero in the expression for $m_4(e,f,g,h)$, namely $T_5$, because in all other trees at some point the homotopy $Q$ will be applied to an element of $A^0$. Below is a picture of the different summands in $A^\bullet$ and the possible ways the homotopy $Q$ can map a monomial element in each summand:
\[\begin{tikzcd}
&&H^2\arrow{d}{\iota_2}&\\
	A^2\colon &&\bullet_{0,1,2}\arrow{dl}{}\arrow{d}{(1)}\arrow{dr}{(3)}&\\
	A^1\colon &\bullet_{0,1}\arrow{d}{}\arrow{dr}{}&\bullet_{0,2}\arrow{dl}{}\arrow[shift right]{dr}{(2)}&\bullet_{1,2}\arrow[shift right]{dl}{}\arrow{d}{(4)}\\
	A^0\colon &\bullet_0&\bullet_1&\bullet_2\arrow{d}{\pi_0}\\
	&&&H^0
\end{tikzcd}
\]
When computing $m_{T_5}(e,f,g,h)$ we should move $e$ through this diagram; at every node it gets multiplied by one of the other arguments and then it moves downwards along one of the arrows. We see that we get non-zero result if we move either along $(1)$ followed by $(2)$ or along $(3)$ followed by $(4)$ (so that we land in $\bullet_2$).
We claim that only the second route is possible. The reason is that at each node we multiply $e$ by a monomial, so the exponents of $x_0$, $x_1$, $x_2$ will not decrease at any time. By the definition of $Q$, if $e$ gets moved along $(1)$ then after the multiplication at $\bullet_{0,1,2}$
the exponent of $x_1$ is non-negative while the exponent of $x_2$ is negative. Hence, after performing the multiplication at $\bullet_{0,2}$ the exponent of $x_1$ is still non-negative.
It follows then from the definition of $Q$ that $e$ cannot move along $(2)$ after moving along $(1)$.

Now comes the computation of $m_{T_5}(e,f,g,h)$. Below, we denote by $\mu$ the multiplication in~$A$. Then
\begin{gather*}
m_{T_5}(e,f,g,h)= \pi\mu(Q\mu(Q\mu(e,f),g),h) \\
\hphantom{m_{T_5}(e,f,g,h)}{}= \pi\mu\big(Q\mu\big(Q\big(x_0^{\alpha_0+a_0}x_1^{\alpha_1+a_1}x_2^{\alpha_2+a_2}\big)_{\{0,1,2\}},g\big),h\big) \\
\hphantom{m_{T_5}(e,f,g,h)}{}\overset{(*)}{=}\pi\mu\big(Q\mu\big(\big(x_0^{\alpha_0+a_0}x_1^{\alpha_1+a_1}x_2^{\alpha_2+a_2}\big)_{\{1,2\}},g\big),h\big)   \\
\hphantom{m_{T_5}(e,f,g,h)}{}=\pi\mu\big(Q\big(x_0^{\alpha_0+a_0+b_0}x_1^{\alpha_1+a_1+b_1}x_2^{\alpha_2+a_2+b_2}\big)_{\{1,2\}},h\big) \\
\hphantom{m_{T_5}(e,f,g,h)}{}\overset{({*}{*})}{=}\pi\big(\mu\big(\big(x_0^{\alpha_0+a_0+b_0}x_1^{\alpha_1+a_1+b_1}x_2^{\alpha_2+a_2+b_2}\big)_{\{2\}},h\big)\big)   \\
\hphantom{m_{T_5}(e,f,g,h)}{}=\pi\big(\big(x_0^{\alpha_0+a_0+b_0+c_0}x_1^{\alpha_1+a_1+b_1+c_1}x_2^{\alpha_2+a_2+b_2+c_2}\big)_{\{2\}}\big) \\
\hphantom{m_{T_5}(e,f,g,h)}{}\overset{({*}{*}{*})}{=} x_0^{\alpha_0+a_0+b_0+c_0}x_1^{\alpha_1+a_1+b_1+c_1}x_2^{\alpha_2+a_2+b_2+c_2},
\end{gather*}
where the symbols $(*)$, $({*}{*})$ are $({*}{*}{*})$ mean that we get zero unless the following conditions hold:
\begin{gather*}
	(*)\ \begin{cases}
	\alpha_0+a_0\geq 0,\\
	\alpha_1+a_1<0,\\
	\alpha_2+a_2<0,
			\end{cases} \quad
	({*}{*})\ \begin{cases}
	\alpha_1+a_1+b_1\geq 0,\\
	\alpha_2+a_2+b_2<0,
			\end{cases}\quad
	({*}{*}{*})\ \begin{cases}
	\alpha_0+a_0+b_0+c_0\geq0,\\
	\alpha_1+a_1+b_1+c_1\geq0,\\
	\alpha_2+a_2+b_2+c_2\geq 0.
	\end{cases}
\end{gather*}
In the end, we have
\[
m_4(e,f,g,h)=-m_{T_5}(e,f,g,h)=
-\rho\big(\vec{\a};\vec{a},\vec{b},\vec{c}\big)\cdot x^{\vec{\a}+\vec{a}+\vec{b}+\vec{c}},
\]
where
\[
\rho\big(\vec{\a};\vec{a},\vec{b},\vec{c}\big):=\begin{cases}
  1 & \text{if} \ \alpha_0+a_0\geq 0,
    \ \alpha_1+a_1<0,
	\ \alpha_1+a_1+b_1\geq 0,\\
    &\hphantom{\text{if}}\ \alpha_2+a_2+b_2<0,
	\ \alpha_2+a_2+b_2+c_2\geq 0,\\
	0 & \text{else}.
\end{cases}\]

Similarly, we compute $m_4$ applied to $e$, $f$, $g$, $h$ in any given order.
We have
\begin{gather*}
m_4(e,f,g,h)=
%&-A\\
-\rho\big(\vec{\a};\vec{a},\vec{b},\vec{c}\big)\cdot x^{\vec{\a}+\vec{a}+\vec{b}+\vec{c}},\\
m_4(f,e,g,h)=\big[-\rho\big(\vec{\a};\vec{a},\vec{b},\vec{c}\big)+\rho\big(\vec{\a};\vec{b},\vec{a},\vec{c}\big)-
\rho\big(\vec{\a};\vec{b},\vec{c},\vec{a}\big)\big]\cdot x^{\vec{\a}+\vec{a}+\vec{b}+\vec{c}},\\
%&-A+B-C\\
m_4(f,g,e,h)=
%&B-C+D&\\Feigin--
\big[\rho\big(\vec{\a};\vec{b},\vec{a},\vec{c}\big)-
\rho\big(\vec{\a};\vec{b},\vec{c},\vec{a}\big)+\rho\big(\vec{\a};\vec{c},\vec{b},\vec{a}\big)\big]\cdot x^{\vec{\a}+\vec{a}+\vec{b}+\vec{c}},\\
m_4(f,g,h,e)=
%&D
\rho\big(\vec{\a};\vec{c},\vec{b},\vec{a}\big)\cdot x^{\vec{\a}+\vec{a}+\vec{b}+\vec{c}}.
\end{gather*}

\section{Feigin--Odesskii brackets}

\subsection{Bivectors on projective spaces}

It is well known that every $\G_m$-invariant bivector on a vector space $V$ leads to a bivector on the projective space $\P V$.
A bivector on $V$ can be thought of as a skew-symmetric bracket $\{\cdot,\cdot\}$ on the polynomial algebra $S(V^*)$,
which is a biderivation. Such a bracket is $\G_m$-invariant if and only if the bracket of two linear forms is a quadratic form.
In other words, such a bracket can be viewed as a skew-symmetric pairing
\[
b\colon \ V^*\times V^*\to S^2(V^*).
\]
The corresponding bivector $\Pi$ on the projective space $\P V$ is determined by the skew-symmetric forms
$\Pi_v$ on $T_v^* \P V$ for each point $\lan v\ran \in \P V$. We have an identification
\[
T_v^* \P V=\lan v\ran^\vee \sub V^*.
\]
It is easy to see that under this identification we have
\begin{equation}\label{Pi-b-formula}
\Pi_v(s_1\we s_2)=b(s_1,s_2)(v),
\end{equation}
where $s_1,s_2\in \lan v\ran^\vee$. Here we take the value of the quadratic form $b(s_1\we s_2)$ at $v$.

We can use the above formula in reverse. Namely, suppose for some bivector $\Pi$ on $\P V$ we found a skew-symmetric pairing $b$ such that
\eqref{Pi-b-formula} holds. Then the $\G_m$-invariant bracket $\{\cdot,\cdot\}$ on $S(V)$ given by $b$ induces the bivector $\Pi$ on $\P V$.
Note that if $\Pi$ is a Poisson bivector on $\P V$, it is not guaranteed that the $\G_m$-invariant bracket $\{\cdot,\cdot\}$ on $S(V)$ is also Poisson, i.e.,
satisfies the Jacobi identity (but it is known that $\{\cdot,\cdot\}$ can be chosen to be Poisson, see~\cite{B, P95}).

\subsection[Recollections from~\protect{[3]}]{Recollections from~\cite{HP}}

Below, we will denote simply by $L_1L_2$ the tensor product of line bundles $L_1$ and $L_2$.

Let $\xi$ be a line bundle of degree $n$ on an elliptic curve $C$. We fix a trivialization $\om_C\simeq \OO_C$.
Then the associated Feigin--Odesskii Poisson structure $\Pi$ (to which we will refer as {\it FO bracket}) on $\P H^1\big(\xi^{-1}\big)\simeq \P H^0(\xi)^*$ is given by the formula (see~\cite[Lemma~2.1]{HP})
\begin{equation}\label{Pi-m3-formula-eq}
\Pi_{\phi}(s_1\we s_2)=\lan \phi, {\rm MP}(s_1,\phi,s_2)\ran,
\end{equation}
where $\lan\phi\ran\in \P\Ext^1(\xi,\OO)$, and $s_1,s_2\in \lan \phi\ran^\perp$.
Here we use the Serre duality pairing $\lan \cdot,\cdot\ran$ between~$H^0(\xi)$ and $H^1\big(\xi^{-1}\big)$ and the triple Massey product
\[
{\rm MP}\colon \ H^0(\xi)\ot H^1\big(\xi^{-1}\big)\ot H^0(\xi)\to H^0(\xi)
\]
that also agrees with the triple product $m_3$ obtained by homological perturbation from the natural dg enhancement of the derived category of coherent sheaves on $C$.
There is some ambiguity in a choice of $m_3$ but for $s_1,s_2\in \lan\phi\ran^{\perp}$, the expression in the right-hand side of
\eqref{Pi-m3-formula-eq} is well defined.

Next, assume that $S$ is a smooth projective surface, $L$ is a line bundle on $S$ such that $H^*(S,LK_S)=0$, and let $C\sub S$
be a smooth connected anticanonical divisor (which is an elliptic curve), so we have an exact sequence of coherent sheaves on $S$,
\begin{equation}
0\to K_S\overset{F}{\to} \OO_S\to \OO_C\to 0.\label{KS-O-OC-ex-seq}
\end{equation}
We have a natural restriction map
\[H^0(S,L)\to H^0(C,L|_C).\]
The exact sequence
\begin{equation}
0\to LK_S\overset{F}{\to} L\to L_C\to 0\label{LKS-L-LC-ex-seq}
\end{equation}
shows that under our assumptions this restriction map is an isomorphism.

Thus, the FO bracket on $\P H^0(L|_C)^*$ associated with $(C,L|_C)$ (defined up to rescaling) can be viewed as a Poisson structure on
a fixed projective space $\P V^*$, where
\[V:=H^0(S,L).\]
By~\cite[Theorem~4.4]{HP}, the Poisson brackets on $\P V^*$ associated with different anticanonical divisors are compatible.
More precisely, we get a linear map from $H^0\big(S,K_S^{-1}\big)$ to the space of bivectors on $\P V^*$, whose image lies in the space of Poisson brackets.

\subsection{Feigin--Odesskii bracket for an anticanonical divisor}

We keep the data $(S,L)$ of the previous subsection. Let $i\colon C\hra S$ be an anticanonical divisor in~$S$, with the equation $F\in H^0\big(S,K_S^{-1}\big)$.
We want to write a formula for the FO bracket $\Pi=\Pi_F$
on $\P V^*$ in terms of higher products on the surface $S$ and the equation $F$. For this we rewrite the right-hand side of formula~\eqref{Pi-m3-formula-eq}.
Let us write the triple product in this formula as~${\rm MP}^C$ to remember that it is defined for the derived category of $C$.

\begin{Proposition}\label{m4-prop}\quad
\begin{enumerate}\itemsep=0pt
\item[$(i)$] In the above situation,
given $e\in V^*$ and $s_1,s_2\in \lan e\ran^{\perp}$, one has
\[
\big\lan e,{\rm MP}^C(s_1|_C,e,s_2|_C)\big\ran=\lan m_4(F,s_1,e,s_2)-m_4(s_1,F,e,s_2),e\ran,\]
where we use the identification $V^*\simeq H^2\big(S,L^{-1}K_S\big)$ given by Serre duality and consider the $A_\infty$-products on $S$,
\begin{gather*}
m_4\colon \ H^0\big(K_S^{-1}\big)H^0(L)H^2\big(L^{-1}K_S\big)H^0(L)\to H^0(L), \\
\hphantom{m_4\colon}{} \ H^0(L)H^0\big(K_S^{-1}\big)H^2\big(L^{-1}\big)H^0(L)\to H^0(L),
\end{gather*}
obtained by the homological perturbation.

\item[$(ii)$] Assume that a generic anticanonical divisor is smooth $($and connected$)$. Then
\[\Pi_F|_e(s_1\we s_2):=\lan m_4(F,s_1,e,s_2)-m_4(s_1,F,e,s_2),e\ran\]
gives a collection of compatible Poisson brackets on $\P V$
depending linearly on $F$.
\end{enumerate}
\end{Proposition}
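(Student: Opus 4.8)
The plan is to prove part $(i)$ by relating the triple Massey product ${\rm MP}^C$ on the curve $C$ to the quadruple product $m_4$ on the surface $S$, and then to deduce part $(ii)$ by combining this identity with formula \eqref{Pi-m3-formula-eq} and the compatibility result \cite[Theorem 4.4]{HP}. For $(i)$, I would first set up the $A_\infty$-transfer from $S$ to $C$. The restriction functor $i^*$ and the structure sequence \eqref{KS-O-OC-ex-seq} express $\OO_C$ as the cone of the map $F\colon K_S\to\OO_S$, so the $A_\infty$-category of $C$ embeds into a suitable enhancement built from the category of $S$ together with the morphism $F$. Under this embedding, the relevant objects of $C$ are represented on $S$ by two-term complexes involving $F$, and the Serre-duality identifications $V^*\simeq H^2\big(S,L^{-1}K_S\big)$ and $V^*\simeq H^2\big(S,L^{-1}\big)$ correspond to lifting $e$ along the two components of these complexes.

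The key step is to expand ${\rm MP}^C(s_1|_C,e,s_2|_C)$ using the homological-perturbation formula as a sum over trees, and to track how the cone differential $F$ enters. Because $\OO_C$ is a cone on $F$, the homotopy on $C$ differs from the homotopy on $S$ precisely by terms that feed $F$ into one of the inner vertices; expanding these contributions converts a single length-three product on $C$ into length-four products on $S$ in which $F$ has been inserted next to $e$. The two placements of $F$—before $s_1$ and after $s_1$, i.e.\ as the first or second argument—produce exactly $m_4(F,s_1,e,s_2)$ and $m_4(s_1,F,e,s_2)$, and the relative sign between them comes from the Koszul sign incurred when $F$ (of even total degree, lying in $H^0$) is commuted past $s_1$ together with the difference of the two tree contributions. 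I would carry this out by matching, term by term, the trees contributing to $m_3$ on $C$ against the pairs of trees contributing to $m_4$ on $S$, using the explicit homotopy $Q$ and the fact that $F$ acts as the connecting differential. Pairing under the Serre-duality pairing $\lan\cdot,\cdot\ran$ with $e$ and using that $s_1,s_2\in\lan e\ran^\perp$ then yields the stated identity.

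For part $(ii)$, the argument is more formal. By part $(i)$ together with formula \eqref{Pi-m3-formula-eq}, for a \emph{smooth} anticanonical divisor $C$ with equation $F$ the bivector $\Pi_F$ defined by the right-hand side coincides with the FO bracket associated with $(C,L|_C)$, which is Poisson by the elliptic-curve theory recalled above. The expression $\lan m_4(F,s_1,e,s_2)-m_4(s_1,F,e,s_2),e\ran$ is manifestly linear in $F$, so it defines a linear map from $H^0\big(S,K_S^{-1}\big)$ to the space of bivectors on $\P V$, agreeing with the FO bracket on the dense open locus of $F$ whose vanishing is smooth and connected. Since the Jacobi identity is a closed (polynomial) condition on the bivector, and it holds on a Zariski-dense set of $F$ by the smooth case, it holds identically; thus every $\Pi_F$ in the image is Poisson. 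Finally, compatibility of the brackets for different divisors is exactly \cite[Theorem 4.4]{HP}, which asserts that this linear image lies in the space of mutually compatible Poisson structures, and linearity in $F$ is what packages them into a compatible family.

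The main obstacle I expect is the sign-and-tree bookkeeping in part $(i)$: correctly identifying which trees on $C$ correspond to which insertions of $F$ on $S$, and verifying that the Koszul signs assemble into precisely the difference $m_4(F,s_1,e,s_2)-m_4(s_1,F,e,s_2)$ rather than a sum or some other combination. The passage from the smooth-divisor case to all $F$ in part $(ii)$, by contrast, is a routine density argument once the closedness of the Jacobi condition is invoked.
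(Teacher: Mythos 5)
Your overall strategy for part (i) --- representing $\OO_C$ and $L|_C$ as cones on $F$ over $S$ and expanding the triple product so that insertions of the cone differential produce the two $m_4$-terms --- is the same as the paper's, which carries this out via twisted complexes. But your reduction from $C$ to $S$ has a genuine gap. You assert that ``the $A_\infty$-category of $C$ embeds into a suitable enhancement built from the category of $S$ together with $F$'' and that ``the homotopy on $C$ differs from the homotopy on $S$ precisely by terms that feed $F$ into one of the inner vertices.'' No such embedding exists in the naive sense: the pushforward $i_*$ is not fully faithful (since $Li^*i_*\OO_C\simeq \OO_C\oplus K_S|_C[1]$, the space $\Ext^1_S(i_*\OO_C,i_*(L|_C))$ acquires an extra summand $H^0\big(C,\big(LK_S^{-1}\big)\big|_C\big)$ beyond $H^1(C,L|_C)$), and moreover the inputs of ${\rm MP}^C$ are morphisms between sheaves on $C$ (namely $s_1|_C$, $s_2|_C$, $e_C$), while the $m_4$'s on the right-hand side take morphisms between line bundles on $S$ (namely $s_1$, $s_2$, $e$, $F$). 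Bridging these two settings is precisely the step your proposal omits. The paper does it by an explicit intermediate computation: writing $r\colon \OO_S\to\OO_C$ and $r_L\colon L\to L|_C$ for the restriction maps, it uses the $A_\infty$-identities to prove
\[
m_3(s_1|_C,e_C,s_2|_C)\,r\equiv m_3(s_1|_C,e_Cr_L,s_2) \mod \lan s_1|_C r,\, s_2|_C r\ran,
\]
i.e., it trades the $C$-inputs for $S$-inputs one at a time, at the cost of working modulo the span of $s_1|_Cr$ and $s_2|_Cr$. That ambiguity is harmless exactly because $s_1,s_2\in\lan e\ran^\perp$, so it dies upon pairing with $e$. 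In your write-up the hypothesis $s_1,s_2\in\lan e\ran^\perp$ is invoked only decoratively at the end; without the trade above there is no point in the argument where it can actually do any work, and your proposed term-by-term matching of trees on $C$ against trees on $S$ is not a comparison of products inside a common $A_\infty$-category.

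A second, smaller error: your sign mechanism cannot be right. As you yourself note, $F$ has even degree, so Koszul signs from commuting it past $s_1$ are trivial and cannot produce the relative minus sign between $m_4(F,s_1,e,s_2)$ and $m_4(s_1,F,e,s_2)$. In the paper the two terms arise as the two possible insertions of the differentials of the two twisted complexes $\big[K_S[1]\overset{F}{\to}\OO_S\big]$ (representing $\OO_C$) and $\big[LK_S[1]\overset{F}{\to}L\big]$ (representing $L|_C$), and the signs come from the formula for $m_3$ on twisted complexes in \cite[Section~7.6]{Keller}; citing that formula also spares you the tree-by-tree bookkeeping, which would amount to re-deriving it. Your part (ii), by contrast, is correct and essentially identical to the paper's argument: linearity in $F$ is manifest, part (i) identifies $\Pi_F$ with the FO bracket for generic $F$, the Jacobi identity is a closed condition and so propagates to all $F$, and compatibility then follows from linearity (equivalently, from \cite[Theorem~4.4]{HP}).
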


\begin{proof}
(i) By Serre duality, $H^*\big(S,L^{-1}\big)=0$, so
the map
\[
H^1\big(C,L^{-1}\big|_C\big)\to H^2\big(S,L^{-1}K_S\big),
\]
induced by the exact sequence
\[
0\to L^{-1}K_S\to L^{-1}\to L^{-1}\big|_C\to 0,
\]
is an isomorphism.
It is a standard fact that this isomorphism is the dual to the isomorphism $H^0(S,L)\to H^0(C,L|_C)$ given by the restriction, via Serre dualities on $S$ and $C$.
Let us denote by $e_C\in H^1\big(C,L^{-1}\big|_C\big)$ the element corresponding to $e\in H^2\big(S,L^{-1}K_S\big)$ under the above isomorphism.

We claim that the triple Massey product ${\rm MP}^C(s_1|_C,e_C,s_2|_C)=m_3(s_1|_C,e_C,s_2|_C)$ corresponding to the arrows
\[\OO_C\overset{s_2|_C}{\longrightarrow} L|_C \overset{e_C}{\longrightarrow} \OO_C\overset{s_1|_C}{\longrightarrow} L|_C\]
(where the middle arrow has degree $1$) agrees with the corresponding triple Massey product on~$S$,
\begin{gather*}
\OO_S\overset{s_2}{\longrightarrow} L\overset{e_C}{\longrightarrow}\OO_C\overset{s_1|_C}{\longrightarrow} L|_C.
\end{gather*}
Indeed, the relevant spaces are identified via the restriction maps. Let
\[
r\colon \ \OO_S\to \OO_C, \qquad r_L\colon \ L\to L|_C
\]
 be the natural maps.
Then we have to check that for $s_1,s_2\in \lan e\ran^\perp\sub H^0(S,L)$, one has
\[m_3(s_1|_C,e_C,s_2|_C) r\equiv m_3(s_1|_C,e_Cr_L,s_2) \mod \lan s_1|_C r,s_2|_C r\ran,\]
where we view this as equality of cosets in $\Hom(\OO_S,L|_C)$.
The $A_\infty$-identities imply that
\[m_3(s_1|_C,e_C,s_2|_C)r= m_3(s_1|_C,e_C,s_2|_Cr)\pm s_1|_Cm_3(e_C,s_2|_C,r),\]
where $s_2|_Cr=r_Ls_2$, and
\[m_3(s_1|_C,e_C,r_Ls_2)=m_3(s_1|_C,e_Cr_L,s_2)\pm s_1|_Cm_3(e_C,r_L,s_2)\pm m_3(s_1|_C,e_C,r_L)s_2.\]
Combining these two identities, we deduce our claim.

Thus, it is enough to calculate the Massey product ${\rm MP}(s_1|_C,e_Cr_L,s_2)$.
Using the exact sequences~\eqref{KS-O-OC-ex-seq} and~\eqref{LKS-L-LC-ex-seq}, we can represent $\OO_C$ (resp.\ $L_C$) by the twisted complex $[K_S[1]\to \OO_S]$
(resp.\ $[LK_S[1]\to L]$).

In terms of these resolutions, the elements of $\Ext^1(L,\OO_C)$ get represented by $\Ext^2(L,K_S)\sub \hom^\bullet(L,[K_S[1]\to \OO_S])$,
while the element of $\Hom(\OO_C,L|_C)$ corresponding to $s\in H^0(S,L)\simeq H^0(C,L|_C)$ is given by the natural map of twisted complexes
induced by the multiplication by~$s$. The elements of $\Hom(\OO_S,L|_C)$ are identified with $\Hom(\OO_S,L)\simeq \hom^0(\OO_S,[LK_S[1]\to L])$.
Thus, the $m_3$ product we are interested is given by the following triple product in the category of twisted complexes over $S$:
\[
\begin{tikzcd}[row sep=3em,column sep=3em]
\OO_S \arrow[d,"s_2"'] & \\
L \arrow[d,"e"'] & \\
K_S[1] \arrow[r,"F"]\arrow[d,"s_1"'] & \OO_S \arrow[d,"s_1"']\\
LK_S[1]\arrow[r,"F"] & \hphantom{,}L,
\end{tikzcd}
\]
%\begin{diagram}
%\OO_S\\
%\dTo{s_2}\\
%L\\
%\dTo{e}\\
%K_S[1]&\rTo{F}&\OO_S\\
%\dTo{s_1}&&\dTo{s_1}\\
%LK_S[1]&\rTo{F}&\hphantom{,}L,
%\end{diagram}
where we view $e$ as a morphism of degree $1$ from $L$ to $K_S[1]$.
Now the formula for $m_3$ on twisted complexes (see~\cite[Section~7.6]{Keller}) gives
\[m_4(F,s_1,e,s_2)-m_4(s_1,F,e,s_2)\]
(here the insertions of $F$ correspond to insertions of the differentials in the twisted complexes).

(ii) It is clear that $\Pi_F$ gives a linear map from $H^0\big(S,\om_S^{-1}\big)$ to the space of bivectors on $\P V$. By~(i), for generic $F$ we get a Poisson bracket.
Hence, this is true for all $F$.
\end{proof}

\subsection[The case leading to 10 compatible brackets on P\^{}5]{The case leading to $\boldsymbol{10}$ compatible brackets on $\boldsymbol{\P^5}$}

We can apply Proposition \ref{m4-prop} to the case $S=\P^2$ and $L=\OO(2)$. Note that the assumptions are satisfied in this case since
$LK_S=\OO(-1)$ has vanishing cohomology. Thus, for each $F\in H^0\big(\P^2,\OO(3)\big)$ giving a smooth cubic, we get a formula for the FO-bracket
$\Pi_F$ on $\P H^0\big(\P^2,\OO(2)\big)^*=\P^5$.
Hence, we get a family of $10$ (the dimension of $H^0\big(\P^2,\OO(3)\big)$
compatible brackets on $\P^5$ (we also know this from~\cite[Proposition~4.7]{HP}). The fact that these $10$ brackets are linearly independent follows from the compatibility of this
construction with the ${\rm GL}_3$-action and is explained in~\cite[Proposition~4.7]{HP}.

Now we will derive formulas for the brackets $\{\,,\,\}_F$ on the algebra of polynomials in $6$ variables which induce the above Poisson brackets on $\P V\simeq \P^5$,
where
\[
V=H^0\big(\P^2,\OO(2)\big)^*.
\]
They depend linearly on $F$, so we will just give formulas for $\{\,,\,\}_{x^{\vec{c}}}$, where $x^{\vec{c}}$ runs through all 10 monomials of degree $3$ in $(x_0,x_1,x_2)$.

Let us set
\[\Delta(n):=\begin{cases}
	\big\{(a_0,a_1,a_2)\in\Z^3\mid a_0+a_1+a_2=n,\, a_i\geq 0 \ \text{for} \ i=0,1,2\big\}& \text{if }n\geq 0,\\
	\big\{(\alpha_0,\alpha_1,\alpha_2)\in \Z^3\mid \alpha_0+\alpha_1+\alpha_2=n,\, \alpha_i<0 \ \text{for}  \ i=0,1,2\big\} & \text{if }n<0.
\end{cases}\]
Note that $\{x^{\vec{e}}\mid \vec{e}\in \Delta(n)\}$ forms a basis for $H^0\big(\P^2,\OO(n)\big)$ when $n\geq 0$, while
$\big\{x^{\vec{e}}_{\{0,1,2\}}\mid \vec{e}\in \Delta(n)\big\}$ is a basis for $H^2\big(\P^2,\OO(n)\big)$ when $n<0$.
In particular, we use $\big\{x^{\vec{a}} \mid \vec{a}\in\Delta(2)\big\}$ as a basis in $V^*=H^0\big(\P^2,\OO(2)\big)$. Our brackets should associate to a pair of elements of this basis a quadratic form
in the same variables.

\begin{Theorem}\label{main-thm} One has for $\vec{a},\vec{b}\in\Delta(2)$, $\vec{c}\in\Delta(3)$,
\begin{equation}\label{main-formula-eq}
\big\{x^{\vec{a}},x^{\vec{b}}\big\}_{x^{\vec{c}}}:=\sum_{\vec{a}\,{}',\vec{b}\,{}'\in \Delta(2)}\bigg[\sum_{\sigma}-\operatorname{sgn}(\sigma)\tilde{\rho}\big(\sigma\vec{a}, \sigma\vec{b},\sigma\vec{c},\vec{a}\,{}',\vec{b}\,{}'\big)\bigg]x^{\vec{a}\,{}'}x^{\vec{b}\,{}'},
\end{equation}
	where the second sum is over the symmetric group on the letters $\{a,b,c\}$ and
	\[\tilde{\rho}\big(\vec{a},\vec{b},\vec{c},\vec{a}\,{}',\vec{b}\,{}'\big):=\begin{cases}
  1 & \text{if} \ a_0'\leq a_0-1, \ a_1'>a_1-1,\ a_1'\leq a_1+b_1-1,\\
	
    & \hphantom{\text{if}}{}\ a_2+b_2<a_2'+1,\ c_2+a_2+b_2\geq a_2'+1,\\
	
	& \hphantom{\text{if}}{}\ a_0'+b_0'=a_0+b_0+c_0-1,\ a_1'+b_1'=a_1+b_1+c_1-1,\\
	0 & \text{else}.
\end{cases}\]
\end{Theorem}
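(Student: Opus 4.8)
The plan is to unwind Proposition~\ref{m4-prop}(ii) in coordinates, using the explicit description of $m_4$ on $\P^2$ from Section~\ref{m4-P2-calc-sec} together with formula~\eqref{Pi-b-formula} read in reverse. Throughout I take $S=\P^2$, $L=\OO(2)$, so that $V=H^0(\P^2,\OO(2))^*\simeq H^2(\P^2,\OO(-5))$ and $V^*=H^0(\P^2,\OO(2))$, with bases $\{x^{\vec\alpha}_{\{0,1,2\}}:\vec\alpha\in\Delta(-5)\}$ and $\{x^{\vec a}:\vec a\in\Delta(2)\}$ respectively; and I set $F=x^{\vec c}$ with $\vec c\in\Delta(3)$, $s_1=x^{\vec a}$, $s_2=x^{\vec b}$ with $\vec a,\vec b\in\Delta(2)$. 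By \eqref{Pi-b-formula} the bracket I must compute is the quadratic form $e\mapsto\langle m_4(F,s_1,e,s_2)-m_4(s_1,F,e,s_2),e\rangle$ on $V$, regarded as an element of $S^2(V^*)$. Since $e$ enters each $m_4$ linearly and is then paired with $e$ again, this is genuinely quadratic in the coordinates of $e$; and because the coordinate functions on $V$ dual to the $H^2$-basis are exactly the linear forms $x^{\vec a}\in V^*$, a quadratic form $\sum C_{\vec a\,',\vec b\,'}e_{\vec a\,'}e_{\vec b\,'}$ is literally the element $\sum C_{\vec a\,',\vec b\,'}x^{\vec a\,'}x^{\vec b\,'}$ of $S^2(V^*)$.

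Next I would substitute the two products, each with $e$ in the third slot, using the ``various order'' formulas at the end of Section~\ref{m4-P2-calc-sec}. Matching the left-to-right $H^0$-arguments (namely $F,s_1,s_2$ for the first product and $s_1,F,s_2$ for the second) and tracking signs, the six resulting $\rho$-terms assemble precisely into $\sum_{\sigma}-\operatorname{sgn}(\sigma)\,\rho(\vec\alpha;\sigma(\vec a,\vec b,\vec c))$, the sum over the six reorderings of $(\vec a,\vec b,\vec c)$; the antisymmetrization in $F$ of the two $m_4$'s is exactly what produces the signed symmetrization. Here $\vec\alpha$ is the exponent of the basis vector $x^{\vec\alpha}_{\{0,1,2\}}$ that $e$ is paired against, and the common output monomial is $x^{\vec\alpha+\vec a+\vec b+\vec c}$.

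It then remains to evaluate the Serre pairing on $\P^2$. Up to the overall normalization coming from the trivialization $\om_C\simeq\OO_C$, the pairing $\langle x^{\vec d},x^{\vec\beta}_{\{0,1,2\}}\rangle$ equals $1$ exactly when $\vec d+\vec\beta=(-1,-1,-1)$ and $0$ otherwise, as it is multiplication followed by the residue projection onto $x_0^{-1}x_1^{-1}x_2^{-1}$. Writing $e=\sum_{\vec a\,'\in\Delta(2)}e_{\vec a\,'}\,x^{(-1,-1,-1)-\vec a\,'}_{\{0,1,2\}}$, so that $\langle x^{\vec a\,'},e\rangle=e_{\vec a\,'}$, the pairing forces $\vec\alpha=(-1,-1,-1)-\vec a\,'$ and puts the second factor of $e$ in the index $\vec b\,'$ with $\vec a\,'+\vec b\,'=\vec a+\vec b+\vec c-(1,1,1)$; these are the two linear constraints in $\tilde\rho$, the third (on the $x_2$-exponents) being automatic from $\vec a\,',\vec b\,'\in\Delta(2)$. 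Finally I would substitute $\alpha_i=-1-a_i'$ into the five inequalities defining $\rho$ and verify, one line each, that they become the five inequalities defining $\tilde\rho$; this yields $\rho((-1,-1,-1)-\vec a\,';\vec a,\vec b,\vec c)=\tilde\rho(\vec a,\vec b,\vec c,\vec a\,',\vec b\,')$, and assembling everything produces \eqref{main-formula-eq}. One should also note that whenever $\rho=1$ the output monomial $x^{\vec\alpha+\vec a+\vec b+\vec c}$ lies in $H^0(\OO(2))$, so the corresponding $\vec b\,'$ automatically lies in $\Delta(2)$, matching the range of summation in the theorem.

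The main obstacle is organizational rather than conceptual: keeping the chain of dualities straight (which space is $V$, the Serre identification $V^*\simeq H^2(\OO(-5))$, and the fact that the coordinate functions dual to the $H^2$-basis are the monomials $x^{\vec a}$ themselves), and confirming that the sign $-\operatorname{sgn}(\sigma)$ survives intact through both the antisymmetrization of the two $m_4$'s and the Serre pairing. Since the FO bracket is only defined up to scale, the global normalization of the Serre pairing is immaterial and may be fixed to $1$.
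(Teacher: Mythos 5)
Your proposal is correct and follows essentially the same route as the paper's proof: invoke Proposition~\ref{m4-prop} with $S=\P^2$, $L=\OO(2)$, expand $e$ in the basis $\big\{x^{\vec\alpha}_{\{0,1,2\}}\big\}$ of $H^2\big(\P^2,\OO(-5)\big)$, assemble the six $\rho$-terms from the various-order $m_4$ formulas into the signed sum over the symmetric group, evaluate the Serre pairing as the projection onto $x_0^{-1}x_1^{-1}x_2^{-1}$, and check that the substitution $\vec\alpha=(-1,-1,-1)-\vec a\,{}'$ turns the inequalities of $\rho$ into those of $\tilde\rho$ (the paper runs this last identification in the opposite direction, evaluating the right-hand side of \eqref{main-formula-eq} on $e$, but it is the same computation). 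Your additional remarks about the redundancy of the third equality constraint and the output monomial automatically landing in $\Delta(2)$ are accurate.
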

\begin{proof}
By Serre duality, we can identify $V=H^0\big(\P^2,\OO(2)\big)^*$ with $H^2\big(\P^2,\OO(-5)\big)$.
%and $\vec{a},\vec{b}\in \Delta(2)$ and $\vec{c}\in \Delta(3)$. Here, $x^{\vec{c}}$ will play the role of $F$ in the bracket $\{\,,\,\}_F$. We identify $S^2(V^*)$ with the space of quadratic forms on $V$ and under this identification
By Proposition \ref{m4-prop}, the bracket $\{x^{\vec{a}},x^{\vec{b}}\}_{x^{\vec{c}}}$ is the quadratic form on $V\simeq H^2\big(\P^2,\OO(-5)\big)$ given by
\[
Q(e):=\big\langle e,m_4\big(x^{\vec{c}},x^{\vec{a}},e,x^{\vec{b}}\big)-m_4\big(x^{\vec{a}},x^{\vec{c}},e,x^{\vec{b}}\big)\big\rangle.
\]
We can write
\[
e=\sum_{\vec{\alpha}\in \Delta(-5)}c_{\vec{\alpha}}x^{\vec{\alpha}}_{\{0,1,2\}}\in H^2\big(\P^2,\OO(-5)\big).
\]
Using the formulas for $m_4$ from the end of Section \ref{m4-P2-calc-sec},
%(we only use the third case!)
we get
	\[
Q(e)=\sum_{\vec{\alpha},\vec{\beta}\in \Delta(-5)}\bigg[\sum_{\sigma}-\operatorname{sgn}(\sigma)\rho\big(\vec{\alpha};\sigma\vec{a},\sigma\vec{b},\sigma\vec{c}\big)\bigg]\delta\big(\vec{\alpha},\vec{\beta},\vec{a},\vec{b},\vec{c}\big)c_{\vec{\alpha}}c_{\vec{\beta}},
\]
	where the second sum runs over the symmetric group on the letters $\{a,b,c\}$ and
	\[
\delta(\vec{\alpha},\vec{\beta},\vec{a},\vec{b},\vec{c})=\begin{cases}
		1 & \text{if }\vec{\alpha}+\vec{\beta}+\vec{a}+\vec{b}+\vec{c}=(-1,-1,-1),\\
		0 & \text{else}.
	\end{cases}
\]

 We have to show that the element in $S^2\big(H^0\big(\P^2,\OO(2)\big)\big)$ given by the right-hand side of~\eqref{main-formula-eq}
defines the same quadratic form $Q$ on $H^2\big(\P^2,\OO(-5)\big)$.
To see this, we apply it to the element $e=\sum_{\vec{\alpha}\in \Delta(-5)}c_{\vec{\alpha}}x^{\vec{\alpha}}_{\{0,1,2\}}\in H^2\big(\P^2,\OO(-5)\big)$. For $\vec{\alpha}\in \Delta(-5)$, we set $\vec{\alpha}^*:=(-1,-1,-1)-\vec{\alpha}$ and then we compute
\begin{gather*}
	\bigg(\sum_{\vec{a}\,{}',\vec{b}\,{}'\in \Delta(2)}\bigg[\sum_{\sigma}-\operatorname{sgn}(\sigma)\tilde{\rho}\big(\sigma\vec{a}, \sigma\vec{b},\sigma\vec{c},\vec{a}\,{}',\vec{b}\,{}'\big)\bigg]x^{\vec{a}\,{}'}x^{\vec{b}\,{}'}\bigg)(e)\\
\qquad{}=	\sum_{\vec{\alpha},\vec{\beta}\in \Delta(-5)}\sum_{\vec{a}\,{}',\vec{b}\,{}'\in \Delta(2)}\!\!\bigg[\sum_{\sigma}-\operatorname{sgn}(\sigma)\tilde{\rho}\big(\sigma\vec{a}, \sigma\vec{b},\sigma\vec{c},\vec{a}\,{}',\vec{b}\,{}'\big)\bigg]\langle x^{\vec{a}\,{}'},x^{\vec{\alpha}}_{\{0,1,2\}}\rangle \langle x^{\vec{b}\,{}'},x^{\vec{\beta}}_{\{0,1,2\}}\rangle c_{\vec{\alpha}}c_{\vec{\beta}}\\
\qquad{}=	\sum_{\vec{\alpha},\vec{\beta}\in \Delta(-5)}\bigg[\sum_{\sigma}-\operatorname{sgn}(\sigma)\tilde{\rho}\big(\sigma \vec{a},\sigma \vec{b},\sigma \vec{c},\vec{\alpha}^*,\vec{\beta}^*\big)\bigg]c_{\vec{\alpha}}c_{\vec{\beta}}.
\end{gather*}
Now it only remains to note that for any permutation $\sigma$, one has
\[
\tilde{\rho}\big(\sigma\vec{a}, \sigma\vec{b},\sigma\vec{c},\vec{\alpha}^*,\vec{\beta}^*\big)=\rho\big(\vec{\alpha};\sigma\vec{a}, \sigma\vec{b},\sigma\vec{c}\big)\delta\big(\vec{\alpha},\vec{\beta},\vec{a},\vec{b},\vec{c}\big),
\]
for $\tilde{\rho}$ given in the formulation of the theorem. \end{proof}

\begin{Remarks}\quad
\begin{enumerate}\itemsep=0pt
\item Note that when we take $\vec{c}=(0,0,3)$ only two permutations $\sigma$, namely, $\sigma=1$ and $\sigma=(a \ b)$, can give non-zero terms in the formula of
Theorem \ref{main-thm}. When $\vec{c}=(1,2,0)$ all permutations except $\sigma=1$ and $\sigma=(a \ b)$ may give non-zero terms. When $\vec{c}=(1,1,1)$ all permutations can give non-zero terms.

\item It is not true that formulas~\eqref{main-formula-eq} define compatible Poisson brackets on the algebra of polynomials in $6$ variables: this is true only for the induced
brackets on $\P^5$ (in other words, the relevant identities hold only for the ratios of coordinates $x_i/x_j$).
\end{enumerate}
\end{Remarks}

\subsection*{Acknowledgements}

We thank the anonymous referee for useful remarks.
A.P. is partially supported by the NSF grant DMS-2001224,
and within the framework of the HSE University Basic Research Program and by the Russian Academic Excellence Project `5-100'.

\pdfbookmark[1]{References}{ref}
\LastPageEnding

\end{document}